\newtheorem{theorem}{Theorem}[section]
\newtheorem{proposition}[theorem]{Proposition}
\newtheorem{lemma}[theorem]{Lemma}
\newtheorem{definition}[theorem]{Definition}
\theoremstyle{definition} 
\newtheorem{example}[theorem]{Example}
\theoremstyle{theorem}
\newtheorem{mainresult}{Theorem}[section]
\renewenvironment{proof}{{\noindent\bfseries Proof.}}{\qed} 
\renewcommand{\ker}{\operatorname{ker}}
\newcommand{\diag}{\operatorname{diag}}
\newcommand{\spn}{\operatorname{span}}
\newcommand{\kem}{K}
\newcommand{\R}{\mathbb{R}}
\begin{document}

\title{Kemeny's constant and the Lemoine point of a simplex}

\author{
  Karel Devriendt\footnote{\textbf{Contact:} \href{mailto:karel.devriendt@mis.mpg.de}{karel.devriendt@mis.mpg.de}}\\
  \small{Max Planck Institute for Mathematics in the Sciences, Leipzig, DE}
}

\date{\small\textit{Dedicated to Piet Van Mieghem, on the occasion of his 60th birthday}}

\maketitle
\begin{abstract}
Kemeny's constant is an invariant of discrete-time Markov chains, equal to the expected number of steps between two states sampled from the stationary distribution. It appears in applications as a concise characterization of the mixing properties of a Markov chain and has many alternative definitions. In this short article, we derive a new geometric expression for Kemeny's constant, which involves the distance between two points in a simplex associated to the Markov chain: the circumcenter and the Lemoine point. Our proof uses an expression due to Wang, Dubbeldam and Van Mieghem of Kemeny's constant in terms of effective resistances and Fiedler's interpretation of effective resistances as edge lengths of a simplex.
\\~\\
\textbf{Keywords:} \textit{Markov chains, Kemeny's constant, simplex geometry, commute time}\\
\textbf{AMS subject classification}: \textit{60J10, 05C81, 52B99, 51K99}
\end{abstract}


\section{Introduction}
A discrete-time Markov chain is a sequence $X_0,X_1,\dots$ of random variables on a state space $\mathcal{X}$, where for all $k>0$ the distribution of $X_{k+1}$ only depends on $X_{k}$. Under suitable conditions, the distribution of $X_k$ converges to a distribution $\pi$, the  stationary distribution. Pick an arbitrary state $x$ and let $y$ be a random state sampled from $\pi$. Then the average number of steps it takes the Markov chain starting at state $x$ to reach state $y$ is independent of $y$ and is called Kemeny's constant (see Theorem \ref{th: Kemeny's constant is a constant}). 

Different explanations have been offered for why Kemeny's constant is a constant: based on the maximum principle \cite{doyle_2009_kemeny}, on the combinatorics of spanning forests \cite{kirkland_2021_directed} and on potential theory and random walks \cite{bini_2018_why}. At the same time, many alternative expressions have been found for Kemeny's constant, for instance based on the spectrum or group inverse of matrices associated with the Markov chain \cite{levene_2002_surfer, hunter_2006_mixing, catral_2010_kemeny, carmona_2023_mean} or based on the effective resistance \cite{wang_2017_kemeny}. Moreover, following its interpretation as an invariant that reflects the average mixing behaviour of a Markov chain, there have been several applications of Kemeny's constant \cite{kooij_2020_kemeny, altafini_2023_centrality}. For more on the history, theory and applications of Kemeny's constant, we refer to \cite{breen_2018_markov}.
\\~\\
Wang, Dubbeldam and Van Mieghem showed that for a certain class of Markov chains --- those corresponding to random walks on undirected graphs --- Kemeny's constant can be expressed in terms of the commute time\footnote{More precisely, they showed that it can be expressed in terms of the effective resistance, which is proportional to the commute time.}. For two states $x$ and $y$, the commute time $c_{xy}$ is the average number of steps it takes the Markov chain starting at $x$ to reach $y$ and then $x$ again. As the commute time of a random walk on a graph is proportional to the so-called effective resistance \cite{chandra_1996_electrical}, Fiedler's geometric theory of effective resistances \cite{fiedler_2011_matrices} leads to the following: for each (suitable) Markov chain, there exists a simplex $S$ whose vertices are identified with states in $\mathcal{X}$ and whose squared edge lengths correspond to commute times between states (see Theorem \ref{th: simplex embedding theorem}). Our main result is related to the following two points associated with this simplex $S$: the \textit{circumcenter} is the unique point at equal distance from all vertices, and the \textit{Lemoine point} is the unique point whose total squared distance to all facets (codimension-one faces) of $S$ is minimal; see Proposition \ref{proposition: circumcenter and Lemoine point coordinates} for an algebraic characterization. We prove the following expression of Kemeny's constant in terms of these geometric quantities.
\begin{mainresult}\label{th: main theorem-intro}
In a finite, irreducible, aperiodic, reversible, loop-free Markov chain, Kemeny's constant $\kem$ is equal to
$$
\kem = R^2 - \Vert \gamma - \ell\Vert^2_2,
$$
where $\gamma$ is the circumcenter at distance $R$ from each vertex of $S$, the simplex whose squared edge lengths are given by the commute times of the Markov chain, and $\ell$ is the Lemoine point of $S$.
\end{mainresult}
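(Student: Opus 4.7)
The plan is to combine three ingredients: the Wang--Dubbeldam--Van Mieghem identity for Kemeny's constant in terms of commute times, an elementary identity relating the power of a point with respect to the circumsphere of a simplex to a quadratic form in its barycentric coordinates, and the explicit barycentric description of the Lemoine point provided by Proposition~\ref{proposition: circumcenter and Lemoine point coordinates}.

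First, I would invoke the commute-time expression of Wang, Dubbeldam, and Van Mieghem~\cite{wang_2017_kemeny}, which for the present class of Markov chains takes the form
$$
\kem = \sum_{i<j} \pi_i\pi_j\, c_{ij},
$$
where $c_{ij}$ is the commute time between states $i$ and $j$. By Theorem~\ref{th: simplex embedding theorem}, these commute times are precisely the squared edge lengths of $S$, so the theorem reduces to a purely geometric statement about $S$.

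Second, I would establish a general identity: for any point $p = \sum_i \lambda_i v_i$ in the affine hull of $S$ with $\sum_i \lambda_i = 1$, writing $v_i - \gamma = (v_i - p) + (p - \gamma)$, squaring, and taking the $\lambda_i$-weighted sum of $\|v_i-\gamma\|_2^2 = R^2$ gives
$$
R^2 - \|p-\gamma\|_2^2 = \sum_i \lambda_i \|p-v_i\|_2^2 = \sum_{i<j} \lambda_i \lambda_j\, c_{ij},
$$
because the cross term $\sum_i \lambda_i \langle v_i - p,\, p-\gamma\rangle$ vanishes by the barycentric condition, and the second equality is the classical Stewart-type identity $\sum_i \lambda_i \|p - v_i\|_2^2 = \sum_{i<j} \lambda_i \lambda_j \|v_i-v_j\|_2^2$.

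Third, I would specialize to $p = \ell$. Invoking Proposition~\ref{proposition: circumcenter and Lemoine point coordinates} to identify the barycentric coordinates of the Lemoine point of $S$ with the stationary distribution, $\lambda_i^\ell = \pi_i$, the right-hand side of the previous display becomes $\sum_{i<j} \pi_i \pi_j c_{ij} = \kem$ by the first step, and the theorem follows.

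The main obstacle will be the identification $\lambda_i^\ell = \pi_i$. By the general characterization of the Lemoine point as the minimizer of the sum of squared distances to the facets of $S$, a Lagrange-multiplier calculation shows that its barycentric coordinates are proportional to the squared facet areas of $S$. The nontrivial input is then a combinatorial-geometric computation showing that in the Fiedler simplex these squared facet areas are proportional to the graph degrees (equivalently to $\pi_i$). This is presumably the content of Proposition~\ref{proposition: circumcenter and Lemoine point coordinates}, which I would take as a black box when proving the main theorem and prove separately using Fiedler's explicit formulas for the Gram matrix of $S$.
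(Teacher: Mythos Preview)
Your proof is correct and takes a genuinely different route from the paper's. The paper computes $\Vert\gamma-\ell\Vert_2^2$ by expanding the bilinear form of Lemma~\ref{lemma: distances as quadratic forms} with $\hat a=\hat\gamma$ and $\hat b=\hat\ell=\pi$, and then uses \emph{both} parts of Proposition~\ref{proposition: circumcenter and Lemoine point coordinates}: the identity $\hat\ell=\pi$ and the algebraic characterization $C\hat\gamma=2R^2\cdot 1$ of the circumcenter. Your argument instead proves the power-of-a-point identity $R^2-\Vert p-\gamma\Vert_2^2=\sum_i\lambda_i\Vert p-v_i\Vert_2^2=\sum_{i<j}\lambda_i\lambda_j c_{ij}$ for an arbitrary point $p$ with barycentric coordinates $\lambda$, using only the defining property $\Vert v_i-\gamma\Vert_2=R$ of the circumcenter and an elementary variance calculation; the equation $C\hat\gamma=2R^2\cdot 1$ is never needed. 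You then specialize to $p=\ell$ via $\hat\ell=\pi$. What your approach buys is a cleaner conceptual picture---Kemeny's constant is literally the power of the Lemoine point with respect to the circumsphere of $S$, and the same identity applied to any other point $p$ interprets $\tfrac12\sum_{x,y}c_{xy}\hat p(x)\hat p(y)$ geometrically---while the paper's approach is more mechanical but treats the two special points symmetrically once the machinery is in place. Your remark that the Lemoine coordinates are proportional to squared facet areas, hence to $\pi$, is also correct and matches the content of Lemma~\ref{lemma: distance from facets}.
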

\begin{example}
Consider the three-state Markov chain in the left figure below.
\begin{figure}[h!]
    \centering    \includegraphics[width=0.65\textwidth]{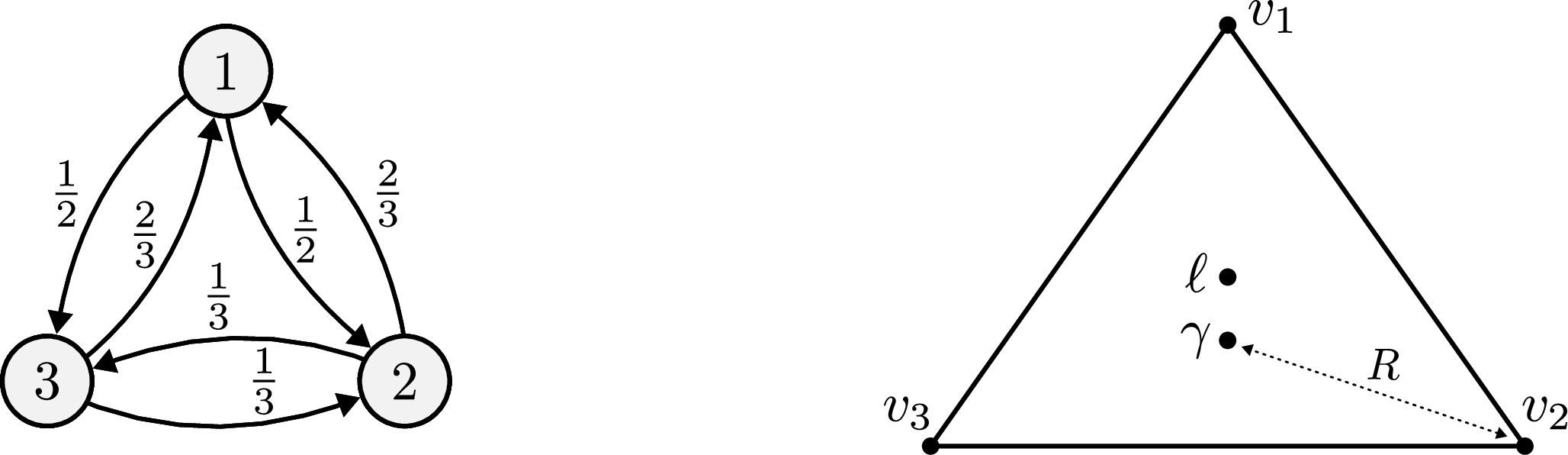}
\end{figure}

The stationary distribution of the Markov chain is $\pi=(\tfrac{4}{10},\tfrac{3}{10},\tfrac{3}{10})$ and the commute times are $c_{12}=3.75, c_{23}=5$ and $c_{13}=3.75$. These commute times are the squared edge lengths of the triangle on the right: $\Vert v_1-v_2\Vert_2=\Vert v_1-v_3\Vert_2=\sqrt{3.75}$ and $\Vert v_2-v_3\Vert_2 = \sqrt{5}$. The circumcenter $\gamma$ and Lemoine point $\ell$ of the triangle are at distance $\Vert\gamma-\ell\Vert_2=0.2372$ and the circumradius is $R=1.1859$. By Theorem \ref{th: main theorem-intro}, Kemeny's constant is $1.1859^2 - 0.2372^2=1.35$. This agrees with other definitions, see for instance equation \eqref{eq: definition Kemeny's constant via commute times} in Section \ref{section: markov chains and kemeny}.
\end{example}
The proof of Theorem \ref{th: main theorem-intro} is given in Section \ref{section: special points and main result} and amounts to an application of the powerful tools and the dictionary between simplices and graphs --- and by extension certain Markov chains --- set up by Fiedler in \cite{fiedler_2011_matrices} and further developed by the author in \cite{devriendt_2019_simplex, devriendt_2022_effective, devriendt_2022_thesis}. The conditions on the Markov chain are quite restrictive, but many of these conditions are necessary to define Kemeny's constant. Moreover, the conditions of Theorem \ref{th: main theorem-intro} are satisfied by random walks on finite connected undirected graphs, which is an important class of Markov chains.

To avoid confusion, we note that in contrast to other works on Kemeny's constant, Theorem \ref{th: main theorem-intro} and the rest of this article are no attempt at explaining or providing intuition why Kemeny's constant is a constant. We simply prove a new geometric expression for Kemeny's constant. An implicit suggestion, however, is that Fiedler's simplex geometry might be a useful tool for problems in Markov chain theory. For more on this connection, see \cite{fiedler_2011_matrices} or \cite{devriendt_2022_thesis}.
\\~\\
\textbf{Organization:} The rest of this article is organized as follows. Section \ref{section: markov chains and kemeny} introduces Markov chains and Kemeny's constant. In Section \ref{section: simplex from M} we introduce the simplex associated with a Markov chain (Theorem \ref{th: simplex embedding theorem}). Section \ref{section: special points and main result} then introduces the points of interest in this simplex (Proposition \ref{proposition: circumcenter and Lemoine point coordinates}) and proves the main result (Theorem \ref{th: main theorem} (=\ref{th: main theorem-intro})). 
\\
~
\\
\textbf{Acknowledgements:} The work that led to this article was started during a research visit at the MAPTHE group at UPC Barcelona. I want to thank \'{A}ngeles, Andr\'{e}s, Maria Jos\'{e}, Enric and Margarida for their warm hospitality and inspiring discussions. The visit was made possible by support from the Alan Turing Institute under the
EPSRC grant EP/N510129/1.

\section{Markov chains, Kemeny's constant and commute times}\label{section: markov chains and kemeny}
A discrete-time Markov chain $M$ is a sequence $(X_n)_{n\in \mathbb{N}}$ of random variables on a state space $\mathcal{X}$, where the distribution of $X_{k+1}$ only depends on $X_k$. This distribution can be characterized by the transition probabilities
$$
P_{xy} := \Pr[X_{k+1} = y \,\vert\, X_{k}=x] \text{~for all $x,y\in \mathcal{X}$}.
$$
We will assume that $\vert \mathcal{X}\vert$ is finite which means that $P$ is a matrix, the transition matrix. A finite Markov chain is fully determined by its state space, the distribution $\pi_0$ of its initial state and its transition matrix and we may thus write $M=(\mathcal{X},P,\pi_0)$. We will assume that $M$ is irreducible and aperiodic: there exists a finite $t_0>0$ such that for all integers $t\geq t_0$, the matrix $P^t$ has all entries positive. Under these assumptions, the distribution of $X_n$ converges to a unique distribution $\pi$, the stationary distribution \cite{Levin_2009_markov}.

For two states $x$ and $y$, the hitting time $h_{xy}$ from $x$ to $y$ is the expected number of steps of the Markov chain starting at $x$ to reach $y$; more precisely
$$
h_{xy} := \mathbb{E}\left(\min\left\lbrace t \in\mathbb{N} \,:\, X_0=x \text{~and~}X_t = y\right\rbrace\right)
$$
with expectation over realizations of the Markov chain. We note that $h_{xx}=0$ for every state $x$. For a fixed state $x$ and a random state $Y$ with distribution $\pi$, the expected hitting time from $x$ to $Y$ is equal to
$$
K_x := \mathbb{E}(h_{xY}) = \sum_{y\in \mathcal{X}}\pi(y)h_{xy}
$$
where the expectation is over realizations of the random state $Y$. The interest in the quantity $K_x$ stems from the observation, first noted in \cite{Kemeny_1960_finite}, that $K_x$ is independent of $x$.
\begin{theorem}\label{th: Kemeny's constant is a constant}
In a finite, irreducible, aperiodic Markov chain $M$ with stationary distribution $\pi$ and hitting times $h$, the value $K_x = \sum_{y\in\mathcal{X}}\pi(y)h_{xy}$ is independent of $x$. This common value is called Kemeny's constant and is denoted by $K=K(M)$.
\end{theorem}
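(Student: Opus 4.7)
The plan is to show that the vector $K=(K_x)_{x\in\mathcal{X}}$ satisfies $PK=K$, and then to invoke the Perron--Frobenius theorem: since $M$ is irreducible, the eigenspace of $P$ associated with eigenvalue $1$ is one-dimensional and spanned by the all-ones vector, so $K_x$ must be constant in $x$.

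To prove $PK = K$, I would first carry out a one-step analysis. For $x\neq y$, conditioning on $X_1$ gives the standard identity
$$
h_{xy} = 1 + \sum_{z\in\mathcal{X}} P_{xz}\, h_{zy},
$$
while $h_{xx}=0$ by definition. A corresponding identity for the expected return time to $x$ yields $1/\pi(x) = 1 + \sum_z P_{xz} h_{zx}$, using the well-known fact that $\pi(x)$ equals the reciprocal of the mean return time.

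Next I would plug these into the definition $K_x = \sum_{y\neq x}\pi(y) h_{xy}$ (dropping the $y=x$ term since $h_{xx}=0$). Distributing gives
$$
K_x = (1-\pi(x)) + \sum_{z} P_{xz} \sum_{y\neq x} \pi(y)\, h_{zy}.
$$
Reintroducing the missing $y=x$ term in the inner sum (it equals $\pi(x) h_{zx}$) turns this into $\sum_z P_{xz} K_z$ minus $\pi(x)\sum_z P_{xz} h_{zx}$; the latter equals $\pi(x)(1/\pi(x)-1) = 1-\pi(x)$ by the return-time identity. The two occurrences of $1-\pi(x)$ cancel, leaving $K_x = \sum_z P_{xz} K_z$, i.e., $PK=K$, as desired.

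I expect the main conceptual step is recognizing that the problem reduces to an eigenvector statement and then handing it off to Perron--Frobenius; the rest is bookkeeping, and the only mild subtlety is remembering the $h_{xx}=0$ convention and treating the $y=x$ term carefully when manipulating $\sum_y \pi(y) h_{xy}$. Aperiodicity is not strictly needed for this argument (irreducibility suffices for Perron--Frobenius uniqueness and for the existence of $\pi$ with $\pi(x)>0$), but is assumed in the theorem statement to ensure convergence to $\pi$, which is how Kemeny's constant was motivated.
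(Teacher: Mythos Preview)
Your argument is correct. The one-step recursion for hitting times, together with the return-time identity $\sum_z P_{xz} h_{zx} = 1/\pi(x) - 1$, does yield $K_x = \sum_z P_{xz} K_z$, and for an irreducible stochastic matrix the right eigenspace for eigenvalue $1$ is indeed one-dimensional and spanned by the constant vector, so $K$ must be constant. Your remark that aperiodicity is not needed for this particular argument is also accurate.

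There is nothing to compare against, however: the paper does not supply its own proof of Theorem~\ref{th: Kemeny's constant is a constant}. It states the result as a classical fact, attributing it to Kemeny and Snell, and immediately uses it to derive the commute-time expression~\eqref{eq: definition Kemeny's constant via commute times}. Your proof is one of the standard ones and would be entirely appropriate to fill the gap.
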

Since $K_x$ is independent of $x$, we can rewrite Kemeny's constant as 
$$
K=\sum_{x}\pi(x)K_x = \sum_{x,y\in\mathcal{X}}\pi(x)\pi(y)h_{xy} = \frac{1}{2}\sum_{x,y\in\mathcal{X}}\pi(x)\pi(y)(h_{xy}+h_{yx}).
$$
The sum $h_{xy}+h_{yx}$ is the expected number of steps it takes a Markov chain which starts at $x$ to reach $y$ and then $x$ again. This is called the commute time and is denoted by $c_{xy}:=h_{xy}+h_{yx}$. Note that $c$ is symmetric and $c_{xx}=0$ for every state $x$. As shown in \cite{wang_2017_kemeny}, this gives Kemeny's constant as
\begin{equation}\label{eq: definition Kemeny's constant via commute times}
K = \frac{1}{2}\sum_{x,y\in\mathcal{X}}\pi(x)\pi(y)c_{xy}.
\end{equation}
For Theorem \ref{th: main theorem}, we require two further assumptions on the Markov chain $M$. First, we assume that $M$ is loop-free\footnote{Without this assumption, the transition matrix cannot be represented by a Laplacian matrix (see Section \ref{section: simplex from M}).}, which means that $P_{xx}=0$ for every state $x$ and second, we assume that $M$ is reversible, which is defined by the condition $P_{xy}\pi(x)=P_{yx}\pi(y)$ for every pair of states $x$ and $y$. We collect the assumptions for further reference:
\begin{equation}\label{eq: assumptions on M}    \text{~$M$ is a discrete-time, finite, irreducible, aperiodic, reversible and loop-free Markov chain}.
\end{equation}
We will say that $M$ is a \eqref{eq: assumptions on M}-Markov chain if it satisfies these conditions.

\section{The simplex associated with a Markov chain}\label{section: simplex from M}
A $d$-dimensional simplex $S$ is the convex hull of $d+1$ affinely independent points in $\R^d$, which are called the vertices of $S$. A face of a simplex is the convex hull of a subset of its vertices, and every face is again a simplex. A one-dimensional face is called an edge and a $(d-1)$-dimensional face is called a facet.
\begin{theorem}\label{th: simplex embedding theorem}
Let $M$ be a \eqref{eq: assumptions on M}-Markov chain on $\mathcal{X}$. Then there exists an $(\vert\mathcal{X}\vert-1)$-dimensional simplex whose squared edge lengths are equal to the commute times of $M$.
\end{theorem}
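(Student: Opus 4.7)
The plan is to pass from the Markov chain $M$ to a weighted undirected graph and then invoke Fiedler's simplex-embedding theorem for effective resistances.

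First I would build the graph. Reversibility $\pi(x)P_{xy}=\pi(y)P_{yx}$ makes the quantities $w_{xy}:=\pi(x)P_{xy}$ symmetric, so they define the edge weights of an undirected weighted graph $G$ on vertex set $\mathcal{X}$; the loop-free assumption kills the diagonal, so $w_{xx}=0$ and $G$ has a genuine Laplacian matrix $L$ with $L_{xy}=-w_{xy}$ off-diagonal and $L_{xx}=\sum_{y\neq x}w_{xy}=\pi(x)$. Irreducibility plus aperiodicity of $M$ imply that $G$ is connected, so $L$ has rank $|\mathcal{X}|-1$ with kernel $\mathrm{span}(\mathbf{1})$. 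By the classical result of Chandra, Raghavan, Ruzzo, Smolensky and Tiwari, the commute time on the random walk associated to $G$ satisfies $c_{xy}=\mathrm{vol}(G)\cdot R_{xy}$, where $R_{xy}=(L^{+})_{xx}+(L^{+})_{yy}-2(L^{+})_{xy}$ is the effective resistance between $x$ and $y$; here $\mathrm{vol}(G)=\sum_x L_{xx}=1$, so in fact $c_{xy}=R_{xy}$ (and if one prefers a different normalization of $w$, one just carries a global scalar through the argument).

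Next I would invoke Fiedler's embedding. Because $L^{+}$ is symmetric positive semidefinite of rank $n-1$ (with $n=|\mathcal{X}|$), it admits a factorization $L^{+}=V^{\!\top}V$ with $V\in\mathbb{R}^{(n-1)\times n}$ of full row rank. Let $v_x$ be the column of $V$ indexed by the state $x$. Then
\[
\|v_x-v_y\|_2^{2}=(L^{+})_{xx}+(L^{+})_{yy}-2(L^{+})_{xy}=R_{xy}=c_{xy}.
\]
The remaining point is to check that the $n$ points $\{v_x\}$ are affinely independent in $\mathbb{R}^{n-1}$, so that their convex hull is an $(n-1)$-simplex. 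This follows from $L^{+}\mathbf{1}=0$: writing $V\mathbf{1}=\sum_x v_x=0$ shows that $\mathbf{1}$ lies in the kernel of $V$, but since $V$ has rank $n-1$ its kernel is one-dimensional, so no nontrivial affine relation other than the one forced by centering can hold among the $v_x$'s.

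I expect the only delicate step to be the affine-independence verification and the bookkeeping of the normalization constant between commute time and effective resistance; everything else is essentially a dictionary translation (Markov chain $\leftrightarrow$ weighted graph $\leftrightarrow$ Laplacian $\leftrightarrow$ simplex via $L^{+}$). Since the statement is explicitly attributed to Fiedler's framework, I would state the above as the proof and, if necessary, absorb any unwanted scalar by rescaling all $v_x$ by a common factor so that the squared edge lengths match the commute times exactly.
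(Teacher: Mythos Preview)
Your proposal is correct and follows essentially the same route as the paper: build the Laplacian $L$ with $L_{xx}=\pi(x)$ and $L_{xy}=-\pi(x)P_{xy}$, factor $L^{\dagger}=V^{\top}V$, and read off $c_{xy}=\Vert v_x-v_y\Vert_2^2$ with affine independence coming from $\ker V=\spn(\mathbf{1})$. The only difference is that the paper reproves the commute-time/effective-resistance identity from the hitting-time recursion $Lh_x=\pi-\delta_x$ rather than citing Chandra et al.\ as a black box, and your observation that $\mathrm{vol}(G)=\sum_x\pi(x)=1$ (so no rescaling is needed) is exactly what makes the two match.
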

Our proof combines the derivation of Chandra et al. \cite{chandra_1996_electrical} for the relation between commute time and effective resistance, with Fiedler's result that the effective resistance is the squared Euclidean distance between the vertices of a simplex \cite{fiedler_2011_matrices}. We first prove a technical lemma on certain matrices that can be associated with Markov chains. Let $L$ be the symmetric $\vert\mathcal{X}\vert\times\vert\mathcal{X}\vert$ matrix with diagonal entries $L_{xx}=\pi(x)$ and non-positive off-diagonal entries $L_{xy}=-\pi(x)P_{xy}$ for all $x\neq y$. This is known as the Laplacian matrix (of the weighted graph associated with $M$) and is a well-studied object in algebraic and spectral graph theory \cite{chung_spectral_1997}.
\begin{lemma}\label{lemma: Laplacians}
Let $L$ be the Laplacian matrix associated with a \eqref{eq: assumptions on M}-Markov chain on $\mathcal{X}$. Then there exists a matrix $L^\dagger$ which is the inverse of $L$ on $\spn(1)^\perp$ and which can be decomposed as $L^\dagger=V^TV$, where $V$ is a $(\vert\mathcal{X}\vert-1)\times\vert\mathcal{X}\vert$ matrix with affinely independent columns and $V1=0$.
\end{lemma}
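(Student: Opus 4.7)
The plan is to verify that $L$ is a well-behaved weighted graph Laplacian and then apply standard linear algebra. Symmetry of $L$ follows directly from reversibility, since $L_{xy}=-\pi(x)P_{xy}=-\pi(y)P_{yx}=L_{yx}$. The identity $L\mathbf{1}=0$ follows from the loop-free condition $P_{xx}=0$ combined with the stochasticity $\sum_y P_{xy}=1$, which together give $\sum_y L_{xy} = \pi(x) - \pi(x)\sum_{y\neq x}P_{xy} = 0$. Positive semidefiniteness is obtained from the standard identity
$$
z^T L z = \tfrac{1}{2}\sum_{x,y\in\mathcal{X}}\pi(x)P_{xy}(z_x-z_y)^2 \geq 0,
$$
whose expansion is symmetric in $x,y$ by reversibility. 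Irreducibility makes the underlying weighted graph connected, so $\ker L = \spn(\mathbf{1})$ and $L$ has rank $|\mathcal{X}|-1$.

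With these properties in hand, I would define $L^\dagger$ as the Moore--Penrose pseudoinverse of $L$: this matrix inverts $L$ on $\spn(\mathbf{1})^\perp$, annihilates $\mathbf{1}$, and inherits symmetry, positive semidefiniteness, and rank $|\mathcal{X}|-1$ from $L$. The reduced spectral decomposition $L^\dagger = U\Sigma U^T$ retains only the $|\mathcal{X}|-1$ nonzero eigenvalues, so setting $V := \Sigma^{1/2}U^T$ yields $L^\dagger = V^T V$ with $V \in \R^{(|\mathcal{X}|-1)\times|\mathcal{X}|}$ of full row rank. The identity $V\mathbf{1}=0$ then follows at once from $\Vert V\mathbf{1}\Vert_2^2 = \mathbf{1}^T L^\dagger \mathbf{1} = 0$.

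For affine independence of the columns of $V$, I would consider the augmented matrix $\tilde V \in \R^{|\mathcal{X}|\times|\mathcal{X}|}$ obtained by appending $\mathbf{1}^T$ as an extra row to $V$. Any $u \in \ker \tilde V$ satisfies $Vu=0$, forcing $u\in\spn(\mathbf{1})$ since the kernel of $V$ equals $\spn(\mathbf{1})$; combined with $\mathbf{1}^T u = 0$ this gives $u=0$, so $\tilde V$ is invertible, which is equivalent to the columns of $V$ being affinely independent. The main (mild) obstacle is precisely this last step --- upgrading full row rank of $V$ to affine independence of all $|\mathcal{X}|$ columns --- but it reduces to the short kernel computation just sketched. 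Everything else is standard material on weighted Laplacians and pseudoinverses, and the proof should fit on a few lines once these ingredients are assembled.
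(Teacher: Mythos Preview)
Your proposal is correct and follows essentially the same route as the paper: establish that $L$ is a symmetric positive semidefinite matrix with one-dimensional kernel $\spn(\mathbf 1)$, take $L^\dagger$ to be the Moore--Penrose pseudoinverse, factor it as $V^TV$, and deduce affine independence of the columns from $\ker V=\spn(\mathbf 1)$. The only cosmetic differences are that you construct $V$ explicitly via the spectral decomposition (the paper just asserts existence) and that you phrase the affine-independence step via invertibility of the augmented matrix $\tilde V$ rather than by contradiction; both arguments unwind to the same kernel computation.
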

\begin{proof}
Note that by construction $L_{xx}=-\sum_{y}L_{xy}$ for all $x$, which means that the rows of $L$ sum to zero. From the quadratic form determined by the Laplacian matrix
\begin{align}
f^TLf &= \sum_x f(x)\Big(\sum_{y\neq x}L_{xy}f(y)+L_{xx}f(x)\Big)\nonumber
\\
&= \sum_x f(x)\sum_{y\neq x}L_{xy}(f(y)-f(x))\nonumber
\\
&= \sum_{x,y\in\mathcal{X}} L_{xy}(f(y)-f(x))(f(x)-f(y))\nonumber
\\
&=-\sum_{x,y\in\mathcal{X}}L_{xy}(f(x)-f(y))^2 \geq 0,\label{eq: Laplacian quadratic form}
\end{align}
we find that $L$ is positive semidefinite. Furthermore, the quadratic form \eqref{eq: Laplacian quadratic form} is equal to zero if and only if $f(x)=f(y)$ whenever  $L_{xy}\neq 0$ or, equivalently, whenever $P_{xy}\neq 0$. By irreducibility of $M$, this implies that $f^TLf$ is zero if and only if $f$ is constant on $\mathcal{X}$. Thus $L$ is positive semidefinite with $\ker(L)$ spanned by the constant vector; in particular, $L$ is invertible when restricted to $\spn(1)^\perp$. This is a classical result, see for instance \cite{chung_spectral_1997}.

The Moore-Penrose pseudoinverse $L^\dagger$ of $L$ is the unique matrix which satisfies
$$
L^\dagger L L^\dagger = L^\dagger,\quad LL^\dagger L=L,\quad (LL^\dagger)^T = LL^\dagger, \quad (L^\dagger L)^T=L^\dagger L.
$$
This matrix always exists uniquely and its definition implies that $L^\dagger$ is positive semidefinite and has $\ker(L^\dagger)$ spanned by the constant vector. Furthermore, $L^\dagger$ is the inverse of $L$ in $\ker(L)^\perp=\operatorname{span}(1)^\perp$. Since $L^\dagger$ is positive semidefinite of rank $\vert\mathcal{X}\vert-1$, there exists a full-rank $\vert\mathcal{X}\vert\times(\vert\mathcal{X}\vert-1)$ matrix $V=[v_1~\dots~v_{\vert\mathcal{X}\vert}]^T$ such that $L^\dagger = V^TV$ and $V1=0$.

Now suppose for contradiction that the columns of $V$ are not affinely independent. Then $0=\sum_{i=2}^{\vert\mathcal{X}\vert} r_i(v_i-v_1)$ for some non-zero coefficients $r_2,\dots r_{\vert\mathcal{X}\vert}$. But then we can define new coefficients $\tilde{r}_i=r_i$ for $i\neq 1$ and $\tilde{r}_1=-\sum_{i=2}^{\vert\mathcal{X}\vert}\tilde{r}_i$ which sum to zero, i.e., $\tilde{r}\perp\spn(1)$, and satisfy $0 = \sum_{i=1}^{\vert\mathcal{X}\vert} \tilde{r}_i v_i$; in other words $\tilde{r}\in\ker(V)$. However, since $V$ has full rank and $V1=0$, we know that $\ker(V)=\spn(1)$ and thus $\tilde{r}\perp \ker(V)$. This contradiction proves that the columns of $V$ must be affinely independent and completes the proof.    
\end{proof}
\\~\\
We are now ready to prove Theorem \ref{th: simplex embedding theorem}, which associates a simplex to every \eqref{eq: assumptions on M}-Markov chain.

\begin{proof} \textbf{(of Theorem \ref{th: simplex embedding theorem})}
Let $M$ be a \eqref{eq: assumptions on M}-Markov chain. Since $M$ is irreducible, the hitting times $h$ are finite and well-defined. Fix a state $x$, then the hitting times to $x$ satisfy the recursion
$$
h_{tx} = \sum_{z\in\mathcal{X}}P_{tz}(1+h_{zx}) \text{~for all $t\in\mathcal{X}\backslash x$}.
$$
Multiplying both sides by $\pi(t)$ and using the fact that $\sum_z P_{tz}=1$, we obtain
\begin{equation}\label{eq: elementwise Laplacian}
\pi(t) = \sum_{z\in \mathcal{X}} -L_{tz}(h_{tx}-h_{zx}) \text{~for all $t\in\mathcal{X}\backslash x$}, 
\end{equation}
where we recall that $L_{tz}=-\pi(t)P_{tz}$ are off-diagonal entries of the Laplacian matrix. We now analyse this system of equations using matrix and vector notation. We consider the following functions on the states as vectors in $\mathbb{R}^{\vert\mathcal{X}\vert}$: the stationary distribution $\pi$, the constant vector $1$, the hitting times $h_x:t\mapsto h_{tx}$ to $x$ and the indicator function $\delta_x:y\mapsto 1$ iff $y=x$. Equation \eqref{eq: elementwise Laplacian} can then be written using the Laplacian matrix $L$ as
$$
Lh_x = \pi + \alpha_x\delta_x,
$$
where $\alpha_x$ is some constant which indicates that \eqref{eq: elementwise Laplacian} is determined except at the diagonal. Using $1^T\pi = 1$ and $1^TL=0$, which holds by construction of $L$, we find that this constant equals
$$
0 = 1^TLh_x = 1^T\pi + \alpha_x1^T\delta_x = 1 + \alpha_x\Rightarrow \alpha_x=-1.
$$
Since $x$ was an arbitrary state, we may repeat the derivation above for $y$ to obtain $Lh_y = \pi-\delta_y$. Combining the expressions for $x$ and $y$, this yields
\begin{equation}\label{eq: matrix Laplacian}
L(h_x-h_y) = \delta_y-\delta_x.
\end{equation}
Multiplying \eqref{eq: matrix Laplacian} from the left with $(\delta_y-\delta_x)^TL^\dagger$, where we recall that $L^\dagger$ is an inverse of $L$ in $\spn(1)^\perp$ as in Lemma \ref{lemma: Laplacians}, we find
$$
(\delta_y-\delta_x)^T(h_x-h_y) = (\delta_y-\delta_x)^TL^\dagger(\delta_y-\delta_x).
$$
With the definition of the commute times and using the decomposition $L^\dagger = V^TV$, we obtain
\begin{equation}\label{eq: commute time and edges}
c_{xy} = (\delta_y-\delta_x)^TL^\dagger(\delta_y-\delta_x) = (\delta_y-\delta_x)^TV^TV(\delta_y-\delta_x) = \Vert v_y-v_x\Vert_2^2.   
\end{equation}
Since the columns $v_1,\dots,v_{\vert\mathcal{X}\vert}$ of $V$ are $\vert\mathcal{X}\vert$ affinely independent points in $\mathbb{R}^{\vert\mathcal{X}\vert-1}$ by Lemma \ref{lemma: Laplacians}, they are the vertices of a ($\vert\mathcal{X}\vert-1$)-dimensional simplex. By \eqref{eq: commute time and edges}, the squared edge lengths of this simplex are equal to the commute times in $M$, which completes the proof.
\end{proof}

\section{Special points in the simplex and Kemeny's constant}\label{section: special points and main result}
The relation between the Markov chain $M$ and the simplex $S$ can be thought of as an embedding which maps the states of $M$ onto the vertices of $S$, and distributions on $\mathcal{X}$ to points in $S$. More generally, this embedding associates to every point in $\mathbb{R}^{\vert\mathcal{X}\vert-1}$ a unique unit-sum function on $\mathcal{X}$ which we call its coordinates\footnote{This is called the ``{barycentric coordinates}" in \cite{fiedler_2011_matrices}.}: the \textit{coordinates} of a point $p$ are denoted by $\hat{p}$ and are uniquely determined by $\sum_x \hat{p}(x)=1$ and $\sum_{x}\hat{p}(x) v_x=p$ where $v_x$ is the vertex associated with state $x$. 

We start with two lemmas that express distances between points and facets of the simplex in terms of their coordinates.
\begin{lemma}[{\cite[Cor. 1.4.14]{fiedler_2011_matrices}}]\label{lemma: distance from facets}
Let $M$ be a Markov chain with stationary distribution $\pi$ and associated simplex $S$. The distance from a point with coordinates $\hat{p}$ to the facet $S_{\mathcal{X}\backslash x}$ opposite the vertex associated with $x$ is equal to $\vert \hat{p}(x)/\sqrt{\pi(x)}\vert$.
\end{lemma}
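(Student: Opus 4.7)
The plan is to reduce the statement to a computation of the height $h_x$ of the vertex $v_x$ above the opposite facet $S_{\mathcal{X}\backslash x}$, and then to compute that height using the Gram-matrix identity $L^\dagger = V^TV$ from Lemma \ref{lemma: Laplacians}. The barycentric coordinate $p \mapsto \hat{p}(x)$ is an affine function of $p$ that equals $1$ at $v_x$ and vanishes on the opposite facet (every point of which is an affine combination of $\{v_y : y\neq x\}$, whose coordinate at $x$ is zero). The signed distance from $p$ to the affine hull of that facet, oriented positively towards $v_x$, is also an affine function, zero on the facet and equal to $h_x$ at $v_x$. Two affine functions that vanish on the same hyperplane are proportional, so the unsigned distance from $p$ to $S_{\mathcal{X}\backslash x}$ equals $|\hat{p}(x)|\cdot h_x$, and it remains to show $h_x = 1/\sqrt{\pi(x)}$.

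To compute $h_x$, I would write the difference between $v_x$ and its orthogonal projection onto the affine hull of $\{v_y : y\neq x\}$ as $V\beta$, where $\beta_x = 1$ and $\mathbf{1}^T\beta = 0$. The orthogonality characterization of the projection reads $\langle V\beta,\, v_y - v_z\rangle = 0$ for all $y,z\neq x$, that is, $(L^\dagger\beta)_y$ is constant on $\mathcal{X}\backslash\{x\}$. So $L^\dagger\beta = \lambda\mathbf{1} + c\,\delta_x$ for some scalars $\lambda,c$. Multiplying on the left by $L$ and using $L\mathbf{1} = 0$, $\mathbf{1}^T\beta = 0$, and $LL^\dagger = I - \tfrac{1}{|\mathcal{X}|}\mathbf{1}\mathbf{1}^T$ gives $\beta = c\,Le_x$; reading off the $x$-th entry, $1 = \beta_x = c\,L_{xx} = c\,\pi(x)$, so $c = 1/\pi(x)$.

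Substituting back yields
$$
h_x^2 = \Vert V\beta\Vert^2 = \beta^T L^\dagger\beta = c^2\, e_x^T L L^\dagger L\, e_x = c^2 L_{xx} = \frac{1}{\pi(x)},
$$
using the Moore--Penrose identity $LL^\dagger L = L$. Combined with the reduction in the first paragraph, this gives the distance $|\hat{p}(x)|/\sqrt{\pi(x)}$ as claimed.

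The main obstacle is the middle step, which requires identifying the foot of the perpendicular from $v_x$ explicitly. The key insight is that orthogonality forces $L^\dagger\beta$ to lie in the two-dimensional subspace $\spn(\mathbf{1},\delta_x)$, after which applying $L$ collapses everything because $L$ inverts $L^\dagger$ on $\spn(\mathbf{1})^\perp$. The only Markov-chain-specific datum entering the final answer is the diagonal entry $L_{xx}=\pi(x)$, which is exactly what accounts for the factor $\sqrt{\pi(x)}$ in the denominator.
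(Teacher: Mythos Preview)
Your proof is correct. The paper does not actually prove this lemma---it is quoted from Fiedler \cite[Cor.~1.4.14]{fiedler_2011_matrices} without argument---so there is no in-paper proof to compare against. Your reduction to the altitude $h_x$ via the proportionality of affine functions vanishing on the same hyperplane is clean, and the computation of $h_x^2$ by writing the foot of the perpendicular in barycentric form and exploiting $L^\dagger=V^TV$ together with $LL^\dagger L=L$ is exactly in the spirit of the Laplacian--simplex dictionary the paper sets up in Lemma~\ref{lemma: Laplacians}. One minor point worth making explicit: you are computing the distance from $p$ to the affine hull of the facet rather than to the facet as a convex set; this is the intended reading (and the one used downstream in Proposition~\ref{proposition: circumcenter and Lemoine point coordinates}), but it would not hurt to say so.
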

\begin{lemma}[{\cite[Cor. 1.2.10]{fiedler_2011_matrices}}]\label{lemma: distances as quadratic forms}
Let $M$ be a \eqref{eq: assumptions on M}-Markov chain with associated simplex $S$ and commute times $c$. The squared distance between two points $a,b\in\mathbb{R}^{\vert\mathcal{X}\vert-1}$ with coordinates $\hat{a}$ and $\hat{b}$ is equal to
$$
\Vert a-b\Vert_2^2 = -\frac{1}{2}\sum_{x,y\in\mathcal{X}}c_{xy}[\hat{a}(x)-\hat{b}(x)][\hat{a}(y)-\hat{b}(y)].
$$
\end{lemma}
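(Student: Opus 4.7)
My plan is to reduce the identity to a short computation with the matrix $V$ from Lemma~\ref{lemma: Laplacians}. Since the columns $v_1,\dots,v_{\vert\mathcal{X}\vert}$ of $V$ are the vertices of $S$, the defining conditions of barycentric coordinates say exactly that $a = V\hat{a}$ and $b = V\hat{b}$. Setting $w := \hat{a} - \hat{b}$, I would use two immediate properties: $a - b = Vw$, and $\mathbf{1}^{T} w = 0$ because both coordinate vectors have unit sum. Together with the factorisation $L^{\dagger} = V^{T} V$, these yield
$$
\Vert a - b\Vert_{2}^{2} \;=\; w^{T} V^{T} V w \;=\; w^{T} L^{\dagger} w.
$$

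The next step is to insert the commute-time identity $c_{xy} = L^{\dagger}_{xx} + L^{\dagger}_{yy} - 2 L^{\dagger}_{xy}$, which is implicit in~\eqref{eq: commute time and edges} by expanding $(\delta_{y} - \delta_{x})^{T} L^{\dagger} (\delta_{y} - \delta_{x})$. Substituting this into the right-hand side of the lemma gives
$$
-\tfrac{1}{2}\sum_{x,y\in\mathcal{X}} \bigl(L^{\dagger}_{xx} + L^{\dagger}_{yy} - 2 L^{\dagger}_{xy}\bigr)\, w(x)\, w(y).
$$
The contribution involving $L^{\dagger}_{xx}$ factors as $-\tfrac{1}{2}\bigl(\sum_{x} L^{\dagger}_{xx} w(x)\bigr)\bigl(\sum_{y} w(y)\bigr)$, which vanishes because $\mathbf{1}^{T} w = 0$; the $L^{\dagger}_{yy}$ term vanishes symmetrically. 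What remains is $\sum_{x,y} L^{\dagger}_{xy} w(x) w(y) = w^{T} L^{\dagger} w$, which matches $\Vert a - b\Vert_{2}^{2}$.

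I do not expect a genuine obstacle. The only conceptual point is that the diagonal contributions $L^{\dagger}_{xx}, L^{\dagger}_{yy}$ in the commute-time identity are precisely what the zero-sum constraint on $w$ kills, and this constraint is automatic for differences of barycentric coordinates. This is the usual mechanism by which resistance-type quadratic forms coincide with $L^{\dagger}$-quadratic forms on $\spn(\mathbf{1})^{\perp}$, specialised here to the simplex associated with $M$.
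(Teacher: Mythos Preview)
Your proof is correct and takes essentially the same approach as the paper's: both compute $\Vert a-b\Vert_2^2 = (\hat a - \hat b)^T V^T V(\hat a - \hat b)$, express $c_{xy}$ in terms of the entries of $V^TV$ (equivalently $L^\dagger$), and use the zero-sum condition on $\hat a - \hat b$ to kill the diagonal contributions. The only cosmetic difference is that the paper packages this as the matrix identity $C = 1\nu^T + \nu 1^T - 2V^TV$ with $\nu = \diag(V^TV)$, while you work entrywise via $L^\dagger$.
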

\begin{proof}
Let $V=[v_1~\dots~v_{\vert\mathcal{X}\vert}]^T$ be the matrix with vertices of the simplex $S$ as columns, such that $a=V\hat{a}$ and $b=V\hat{b}$. Since $c_{xy}=(v_x-v_y)^T(v_x-v_y)=v_x^Tv_x+v_y^Tv_y-2v_x^Tv_y$, the commute time matrix equals $C=1\nu^T+\nu 1^T-2V^TV$ where $\nu=\diag(V^TV)$. We then find
$$
\Vert a-b\Vert_2^2 = (\hat{a}-\hat{b})^TV^TV(\hat{a}-\hat{b})=(\hat{a}-\hat{b})^T[1\nu^T+\nu 1^T-\tfrac{1}{2}C](\hat{a}-\hat{b})=-\tfrac{1}{2}(\hat{a}-\hat{b})^TC(\hat{a}-\hat{b}),
$$
where we use that coordinates are unit-sum functions and thus $1^T(\hat{a}-\hat{b})=0$.
\end{proof}
\\
~
\\
We can now consider the two points of interest for the main result.
\begin{proposition}[{\cite[Cor. 1.4.13, Thm. 2.2.3]{fiedler_2011_matrices}}]\label{proposition: circumcenter and Lemoine point coordinates}
Let $M$ be a \eqref{eq: assumptions on M}-Markov chain with simplex $S$ and commute times $c$. 
\begin{itemize}
    \item The \textbf{circumcenter} $\gamma$ is the unique point at equal distance $R$ from all vertices of $S$. Its coordinates $\hat{\gamma}$ are determined by the equations $\sum_y c_{xy}\hat{\gamma}(y)=2R^2$ for every state $x$.
    \item The \textbf{Lemoine point} $\ell$ is the unique point at minimal total squared distance to the facets of $S$. Its coordinates equal $\hat{\ell}=\pi$.
\end{itemize}
\end{proposition}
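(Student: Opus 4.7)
The plan is to translate each geometric condition into an algebraic equation on barycentric coordinates using the two preceding lemmas, and then solve.

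For the circumcenter, I would apply Lemma \ref{lemma: distances as quadratic forms} to compute $\Vert\gamma - v_x\Vert_2^2$, using that the vertex $v_x$ has coordinates $\delta_x$. Expanding
$$
\Vert\gamma - v_x\Vert_2^2 = -\tfrac{1}{2}\sum_{y,z\in\mathcal{X}}c_{yz}(\hat{\gamma}(y)-\delta_x(y))(\hat{\gamma}(z)-\delta_x(z))
$$
and invoking the loop-free condition $c_{xx}=0$, the cross-terms (symmetric in $y,z$) collapse to $\sum_{y}c_{xy}\hat{\gamma}(y)$ and the pure $\hat{\gamma}$-term produces an $x$-independent constant $C=-\tfrac{1}{2}\sum_{y,z}c_{yz}\hat{\gamma}(y)\hat{\gamma}(z)$. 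Thus equidistance, $\Vert\gamma - v_x\Vert_2^2 = R^2$ for every $x$, is equivalent to $\sum_{y}c_{xy}\hat{\gamma}(y)$ being a constant $\alpha$ independent of $x$. To identify $\alpha$, I would take the convex combination of these relations with the weights $\hat{\gamma}(x)$: since $\sum_x\hat{\gamma}(x)=1$, this yields $\sum_{x,y}c_{xy}\hat{\gamma}(x)\hat{\gamma}(y)=\alpha$, hence $C=-\alpha/2$ and $R^2 = \alpha + C = \alpha/2$, giving $\alpha=2R^2$. Existence and uniqueness of $\gamma$ follow from the standard fact that any simplex in Euclidean space admits a unique circumcenter.

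For the Lemoine point, Lemma \ref{lemma: distance from facets} expresses the total squared distance from a point with coordinates $\hat{p}$ to the facets of $S$ as $\Phi(\hat{p}) = \sum_{x}\hat{p}(x)^2/\pi(x)$. I would minimize this subject to the unit-sum constraint $\sum_x\hat{p}(x)=1$ by Cauchy--Schwarz, applied to the vectors $(\hat{p}(x)/\sqrt{\pi(x)})_x$ and $(\sqrt{\pi(x)})_x$:
$$
1 = \Big(\sum_x \hat{p}(x)\Big)^2 \le \Big(\sum_x \frac{\hat{p}(x)^2}{\pi(x)}\Big)\Big(\sum_x \pi(x)\Big) = \Phi(\hat{p}),
$$
with equality iff $\hat{p}(x)\propto\pi(x)$ for every $x$. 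The normalization then forces $\hat{\ell}=\pi$ uniquely, yielding both the minimizer and the claimed coordinate formula.

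The main obstacle, modest as it is, lies in the circumcenter case: one must carefully exploit $c_{xx}=0$ (the step where the loop-free assumption enters) to suppress the diagonal term in the expansion, and then use the unit-sum constraint on $\hat{\gamma}$ to pin down the constant $\alpha=2R^2$. The Lemoine point part is an immediate constrained optimization once Lemma \ref{lemma: distance from facets} has been invoked, with Cauchy--Schwarz supplying both existence, uniqueness and the explicit minimizer in one stroke.
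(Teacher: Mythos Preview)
Your proof is correct and follows essentially the same route as the paper's: expand $\Vert\gamma-v_x\Vert_2^2$ via Lemma~\ref{lemma: distances as quadratic forms}, use the unit-sum condition on $\hat{\gamma}$ to identify the constant as $2R^2$, and apply Cauchy--Schwarz after Lemma~\ref{lemma: distance from facets} for the Lemoine point. Two minor remarks: $c_{xx}=0$ holds by definition of the commute time (since $h_{xx}=0$) and is not where the loop-free assumption enters, and the weights $\hat{\gamma}(x)$ need not be nonnegative, so your ``convex combination'' is really an affine one.
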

\begin{proof} We start with the circumcenter. For every state $x$, the distance from the vertex $v_x$ (with coordinates $\hat{v_x}=\delta_x$) to the circumcenter $\gamma$ is equal to $R$. By Lemma \ref{lemma: distances as quadratic forms}, we find
$$
R^2 = d^2(v_x,\gamma) 
\,=\, -\frac{1}{2}[\delta_x - \hat{\gamma}]^TC[\delta_x - \hat{\gamma}] \,=\, \delta_x^T C\hat{\gamma} - \tfrac{1}{2}\hat{\gamma}^T C\hat{\gamma}.
$$
Since $R$ and $\tfrac{1}{2}\hat{\gamma}^TC\hat{\gamma}$ are independent of $x$, so must $\delta_x^TC\hat{\gamma}$ be, and thus $C\hat{\gamma}=\alpha\cdot 1$ for some $\alpha\in\mathbb{R}$. Introducing this expression back into the equation above, we obtain $\alpha$ as
$$
R^2=\delta_x^T C\hat{\gamma} - \tfrac{1}{2}\hat{\gamma}^T C\hat{\gamma} \iff R^2 = \alpha - \tfrac{1}{2}\alpha \iff \alpha=2R^2.
$$
We thus find that $\hat{\gamma}$ must satisfy $C\hat{\gamma}=2R^2\cdot 1$ which completes the first part of the Proposition.
\\
We continue with the Lemoine point and follow the derivation of Fiedler in \cite[Thm. 2.2.3]{fiedler_2011_matrices}. Let $p$ be a point in $\mathbb{R}^{\vert\mathcal{X}\vert-1}$. By Lemma \ref{lemma: distance from facets}, the total squared distance from $p$ to the facets of $S$ is equal to
$$
\sum_{x\in\mathcal{X}}d^2(p,S_{\mathcal{X}\backslash x}) = \sum_{x\in\mathcal{X}}\frac{\hat{p}(x)^2}{\pi(x)} = \sum_{x\in\mathcal{X}}\frac{\hat{p}(x)^2}{{\pi(x)}}\sum_{x\in\mathcal{X}}{\pi(x)} \geq \Big(\sum_{x\in\mathcal{X}}\hat{p}(x)\Big)^2 = 1.
$$
Here, we used the fact that $\pi$ and $\hat{p}$ are unit-sum vectors and the Cauchy--Schwarz inequality $\big(\sum_{x}u_x v_x\big)^2\leq \big(\sum_x u_x^2\big)\big(\sum_x v_x^2\big)$ with $u_x=\vert\hat{p}(x)\vert /\sqrt{\pi(x)}$ and $v_x=\sqrt{\pi(x)}$. Equality in the Cauchy--Schwarz bound is achieved if and only if $u_x=v_x$, and thus $\hat{p}(x)=\pi(x)$ in this case. In other words, the smallest total squared distance to all facets is $1$ and this is achieved by the point with coordinates $\pi$, as claimed. This completes the proof.
\end{proof}

We note that the Lemoine point always lies in the interior of the simplex $S$ because $\pi(x)>0$ for all $x$. On the other hand, the circumcenter lies inside the simplex if and only if $\hat{\gamma}(x)\geq 0$ for all $x$. In \cite[Ch. 6]{devriendt_2022_thesis} this condition is related to a notion of `nonnegative discrete curvature' for the weighted graph associated with simplex $S$. What does $\hat{\gamma}\geq 0$ imply for a Markov chain?

With Proposition \ref{proposition: circumcenter and Lemoine point coordinates} and Lemma \ref{lemma: distances as quadratic forms}, we can now prove the main result; this is Theorem \ref{th: main theorem-intro} from the introduction.
\begin{theorem}\label{th: main theorem}
In a finite, irreducible, aperiodic, reversible, loop-free Markov chain, Kemeny's constant $\kem$ is equal to
$$
\kem = R^2 - \Vert \gamma - \ell\Vert^2_2,
$$
where $\gamma$ is the circumcenter at distance $R$ from each vertex of $S$, the simplex whose squared edge lengths are given by the commute times of the Markov chain, and $\ell$ is the Lemoine point of $S$.
\end{theorem}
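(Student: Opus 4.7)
The plan is to compute $\Vert \gamma - \ell\Vert_2^2$ directly using the coordinate description of both points and the quadratic form for distances in Lemma \ref{lemma: distances as quadratic forms}, then match the result against the commute-time expression \eqref{eq: definition Kemeny's constant via commute times} for Kemeny's constant.

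First, I would apply Lemma \ref{lemma: distances as quadratic forms} to $a = \gamma$ and $b = \ell$ to write
$$
\Vert \gamma - \ell\Vert_2^2 \;=\; -\tfrac{1}{2}(\hat{\gamma}-\hat{\ell})^T C (\hat{\gamma}-\hat{\ell}),
$$
where $C$ denotes the matrix of commute times $c_{xy}$. Next, I would substitute the coordinate identities supplied by Proposition \ref{proposition: circumcenter and Lemoine point coordinates}: $\hat{\ell} = \pi$, and $C\hat{\gamma} = 2R^2 \cdot 1$. Expanding the quadratic form gives three terms, $\hat{\gamma}^T C\hat{\gamma}$, $\pi^T C\hat{\gamma}$, and $\pi^T C\pi$, and the first two are trivial to evaluate: since $\hat{\gamma}$ and $\pi$ are unit-sum vectors, both equal $2R^2$.

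Putting these together yields
$$
\Vert \gamma - \ell\Vert_2^2 \;=\; -\tfrac{1}{2}\bigl(2R^2 - 2\cdot 2R^2 + \pi^T C\pi\bigr) \;=\; R^2 - \tfrac{1}{2}\pi^T C\pi.
$$
Finally, I would observe that $\tfrac{1}{2}\pi^T C\pi = \tfrac{1}{2}\sum_{x,y}\pi(x)\pi(y)c_{xy}$, which is exactly $K$ by \eqref{eq: definition Kemeny's constant via commute times}. Rearranging gives $K = R^2 - \Vert \gamma - \ell\Vert_2^2$, as claimed.

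There is no real obstacle here; the whole proof is a short computation, and everything nontrivial has already been absorbed into Proposition \ref{proposition: circumcenter and Lemoine point coordinates} (the coordinate characterizations of $\gamma$ and $\ell$), Lemma \ref{lemma: distances as quadratic forms} (the Cayley--Menger-style distance formula), and the commute-time formulation \eqref{eq: definition Kemeny's constant via commute times} of $K$. The only thing to keep an eye on is the bookkeeping for the cross-term and the repeated use of the unit-sum property of coordinates, which is what makes the identifications $\hat{\gamma}^T C\hat{\gamma} = \pi^T C\hat{\gamma} = 2R^2$ go through cleanly.
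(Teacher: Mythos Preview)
Your proposal is correct and is essentially identical to the paper's own proof: both apply Lemma~\ref{lemma: distances as quadratic forms} to $\gamma$ and $\ell$, substitute $\hat{\ell}=\pi$ and $C\hat{\gamma}=2R^2\cdot 1$ from Proposition~\ref{proposition: circumcenter and Lemoine point coordinates}, expand the quadratic form using the unit-sum property, and identify the remaining term $\tfrac{1}{2}\pi^T C\pi$ with $K$ via \eqref{eq: definition Kemeny's constant via commute times}. The only difference is cosmetic---you use matrix notation where the paper writes out the sums.
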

\begin{proof}
Let $M$ be a \eqref{eq: assumptions on M}-Markov chain with stationary distribution $\pi$, commute times $c$ and associated simplex $S$. The squared distance between the circumcenter $\gamma$ and the Lemoine point $\ell$ equals
\begin{align*}
\Vert \gamma-\ell\Vert_2^2 &= -\frac{1}{2}\sum_{x,y\in\mathcal{X}}c_{xy}[\hat{\ell}(x)-\hat{\gamma}(x)][\hat{\ell}(y)-\hat{\gamma}(y)] \quad\text{~(Lemma \ref{lemma: distances as quadratic forms})}
\\
&= -\frac{1}{2}\sum_{x,y\in\mathcal{X}}c_{xy}[\pi(x)-\hat{\gamma}(x)][\pi(y)-\hat{\gamma}(y)] \quad\text{~(Proposition \ref{proposition: circumcenter and Lemoine point coordinates})}
\\
&= \sum_{x,y\in\mathcal{X}}\left[-\tfrac{1}{2}c_{xy}\pi(x)\pi(y) + c_{xy}\pi(x)\hat{\gamma}(y) -\tfrac{1}{2}c_{xy}\hat{\gamma}(x)\hat{\gamma}(y)\right]
\\
&= -\frac{1}{2}\sum_{x,y\in\mathcal{X}}c_{xy}\pi(x)\pi(y) + 2R^2\sum_{x\in\mathcal{X}}(\pi(x)-\tfrac{1}{2}\hat{\gamma}(x)) \quad\text{(Proposition \ref{proposition: circumcenter and Lemoine point coordinates})}
\\
&= -K + R^2 \quad\quad\text{(Eq. \eqref{eq: definition Kemeny's constant via commute times})}.
\end{align*}
This completes the proof.
\end{proof}

\bibliographystyle{abbrv}
\bibliography{bibliography.bib}

\begin{thebibliography}{10}

\bibitem{altafini_2023_centrality}
D.~Altafini, D.~A. Bini, V.~Cutini, B.~Meini, and F.~Poloni.
\newblock An edge centrality measure based on the {K}emeny constant.
\newblock {\em SIAM Journal on Matrix Analysis and Applications},
  44(2):648--669, 2023.

\bibitem{bini_2018_why}
D.~Bini, J.~J. Hunter, G.~Latouche, B.~Meini, and P.~Taylor.
\newblock Why is {K}emeny's constant a constant?
\newblock {\em Journal of Applied Probability}, 55(4):1025--1036, 2018.

\bibitem{breen_2018_markov}
J.~Breen.
\newblock {\em Markov Chains under Combinatorial Constraints: Analysis and
  Synthesis}.
\newblock PhD thesis, University of Manitoba, 2018.

\bibitem{carmona_2023_mean}
A.~Carmona, M.~J. Jim\'{e}nez, and A.~Mart{í}n.
\newblock Mean first passage time and {K}emeny's constant using generalized
  inverses of the combinatorial {L}aplacian.
\newblock {\em Linear and Multilinear Algebra}, 0(0):1--15, 2023.

\bibitem{catral_2010_kemeny}
M.~Catral, S.~J. Kirkland, M.~Neumann, and N.-S. Sze.
\newblock The {K}emeny constant for finite homogeneous ergodic {M}arkov chains.
\newblock {\em Journal of scientific computing}, 45(1-3):151--166, 2010.

\bibitem{chandra_1996_electrical}
A.~K. Chandra, P.~Raghavan, W.~L. Ruzzo, R.~Smolensky, and P.~Tiwari.
\newblock The electrical resistance of a graph captures its commute and cover
  times.
\newblock {\em Computational Complexity}, 6(4):312--340, 1996.

\bibitem{chung_spectral_1997}
F.~R.~K. Chung.
\newblock {\em Spectral Graph Theory}.
\newblock American Mathematical Society, Providence, RI, US, 1997.

\bibitem{devriendt_2022_effective}
K.~Devriendt.
\newblock Effective resistance is more than distance: {L}aplacians, {S}implices
  and the {S}chur complement.
\newblock {\em Linear Algebra and its Applications}, 639:24--49, 2022.

\bibitem{devriendt_2022_thesis}
K.~Devriendt.
\newblock {\em Graph geometry from effective resistances}.
\newblock PhD thesis, University of Oxford, 2022.

\bibitem{devriendt_2019_simplex}
K.~Devriendt and P.~Van~Mieghem.
\newblock {The simplex geometry of graphs}.
\newblock {\em Journal of Complex Networks}, 7(4):469--490, 01 2019.

\bibitem{doyle_2009_kemeny}
P.~G. Doyle.
\newblock The {K}emeny constant of a {M}arkov chain, 2009.
\newblock arXiv:0909.2636 [math.PR].

\bibitem{fiedler_2011_matrices}
M.~Fiedler.
\newblock {\em Matrices and Graphs in Geometry}, volume 139 of {\em
  Encyclopedia of Mathematics and its Applications}.
\newblock Cambridge University Press, Cambridge, UK, 2011.

\bibitem{hunter_2006_mixing}
J.~J. Hunter.
\newblock Mixing times with applications to perturbed markov chains.
\newblock {\em Linear Algebra and its Applications}, 417(1):108--123, 2006.

\bibitem{Kemeny_1960_finite}
J.~G. Kemeny and J.~L. Snell.
\newblock {\em Finite Markov chains}.
\newblock University series in undergraduate mathematics. Van Nostrand,
  Princeton, N.J, 1960.

\bibitem{kirkland_2021_directed}
S.~Kirkland.
\newblock Directed forests and the constancy of {K}emeny’s constant.
\newblock {\em Journal of algebraic combinatorics}, 53(1):81--84, 2021.

\bibitem{kooij_2020_kemeny}
R.~E. Kooij and J.~L. Dubbeldam.
\newblock Kemeny’s constant for several families of graphs and real-world
  networks.
\newblock {\em Discrete Applied Mathematics}, 285:96--107, 2020.

\bibitem{levene_2002_surfer}
M.~Levene and G.~Loizou.
\newblock {K}emeny's constant and the random surfer.
\newblock {\em The American Mathematical Monthly}, 109(8):741--745, 2002.

\bibitem{Levin_2009_markov}
D.~A. Levin, Y.~Peres, E.~L. Wilmer, J.~Propp, and D.~B. Wilson.
\newblock {\em Markov chains and mixing times}.
\newblock American Mathematical Society, Providence, R.I., 2009.

\bibitem{wang_2017_kemeny}
X.~Wang, J.~Dubbeldam, and P.~Van~Mieghem.
\newblock Kemeny's constant and the effective graph resistance.
\newblock {\em Linear Algebra and its Applications}, 535, 09 2017.

\end{thebibliography}
\end{document}